\documentclass[11pt,leqno]{article}
\usepackage{amssymb,amsmath,amsthm,amsxtra}
\usepackage[margin=3cm]{geometry} 
\usepackage{hyperref} 
\usepackage[all]{xy} 

\sloppy

\swapnumbers 

\newtheorem{theorem}[equation]{Theorem}
\newtheorem{proposition}[equation]{Proposition} 

\newtheorem{corollary}[equation]{Corollary}

\theoremstyle{definition}
\newtheorem{example}[equation]{Example}

\newtheorem{remark}[equation]{Remark}


\makeatletter 
\@addtoreset{equation}{section} 
\makeatother

\renewcommand{\Re}{\operatorname{Re}}
\renewcommand{\Im}{\operatorname{Im}} 

\newcommand{\Z}{\mathbb{Z}}

\newcommand{\R}{\mathbb{R}} 
\newcommand{\C}{\mathbb{C}} 
\newcommand{\PP}{\mathbb{P}}
\newcommand{\JJ}{\mathbb{J}} 

\newcommand{\hGamma}{\widehat{\Gamma}}

\newcommand{\cO}{\mathcal{O}} 
\newcommand{\cI}{\mathcal{I}} 
\newcommand{\hcI}{\widehat{\cI}} 
\newcommand{\tI}{\widetilde{I}}
\newcommand{\cM}{\mathcal{M}} 
\newcommand{\cV}{\mathcal{V}}

\newcommand{\ch}{\operatorname{ch}} 
\newcommand{\End}{\operatorname{End}}

\newcommand{\vol}{\operatorname{vol}} 
\newcommand{\Ch}{\operatorname{Ch}}

\newcommand{\iu}{\mathtt{i}}

\def\corr#1{\left\langle #1 \right\rangle}

\begin{document} 

\title{Mirror symmetric Gamma conjecture for Fano and Calabi-Yau manifolds}  
\author{Hiroshi Iritani} 
\date{\empty}

\maketitle 

\begin{abstract} 
The mirror symmetric Gamma conjecture roughly speaking says that 
the Gamma class of a manifold determines the asymptotics of (exponential) periods of the mirror.  
We recast the method in \cite{Iritani:periods} in a more general context 
and show that the mirror symmetric Gamma conjecture for a Fano manifold $F$ implies, 
via Laplace transformation, that for the total space $K_F$ of the canonical bundle 
or for anticanonical sections in $F$. 
More generally, we discuss the mirror symmetric Gamma conjecture for 
the total space of a sum of anti-nef line bundles over $F$ 
or for nef complete intersections in $F$. 
\end{abstract} 


\section{Overview} 
In this paper we compare the `mirror symmetric' 
Gamma conjectures for Fano and Calabi-Yau manifolds, which roughly speaking say the following. 
\begin{itemize} 
\item[(1)] For a Fano manifold $F$ and its Laurent polynomial mirror $W(x)$, we have 
\[
\int_{(\R_{>0})^n} e^{-W(x)/z} \frac{dx_1\cdots dx_n}{x_1\cdots x_n}  
\sim \int_F z^{c_1} \hGamma_F \quad \text{as $z\to +\infty$} 
\]
with $\hGamma_F$ the Gamma class of $F$. 
\item[(2)] For a Calabi-Yau manifold $Y$ with K\"ahler form in the class $\tau\in H^2(Y)$ and its mirror Calabi-Yau manifold $Z_\tau$ with a holomorphic volume form $\Omega_\tau$, we have 
\[
\int_{C_\tau} \Omega_\tau \sim \int_Y e^{-\tau} \hGamma_Y \quad 
\text{as $\tau$ approaches the large radius limit} 
\]
for some integral cycles $C_\tau$.  
\end{itemize} 
Here the \emph{Gamma class} of a complex manifold is the characteristic class of the tangent bundle associated with the Taylor expansion of the Euler $\Gamma$-function $\Gamma(z)$ at $z=1$ and it can be expressed in terms of the Chern classes and Riemann zeta values (see \eqref{eq:Gamma_class}).   
For a Fano manifold $F$, we can construct a Calabi-Yau manifold by either taking the total space $K_F$ of the canonical bundle or taking an anticanonical section $Y\subset F$. 
We study the relationship between the mirror symmetric Gamma conjectures in these cases. 

More precisely, if a Laurent polynomial mirror $W$ of $F$ satisfies the mirror symmetric Gamma conjecture, we show that a certain relative period of the pair $((\C^\times)^n, W^{-1}(-1))$ satisfies the Gamma conjecture for the local Calabi-Yau $K_F$ and a period of the fiber $W^{-1}(1)$ satisfies the Gamma conjecture for an anticanonical Calabi-Yau $Y\subset F$. 
The main results will be stated in Theorems \ref{thm:main} and \ref{thm:generalizations} below. 

\section{Non-mirror-symmetric Gamma conjecture for Fano manifolds in a nutshell} 
\label{sec:nutshell} 
Non-mirror-symmetric Gamma conjecture for Fano manifolds \cite{GGI} 
roughly speaking says that one can extract topological 
information (the Gamma class) of a Fano manifold $F$ by counting rational curves in $F$. 
Recall that the Gamma class $\hGamma_F$ is a characteristic class defined by 
\begin{align}
\label{eq:Gamma_class} 
\begin{split}  
\hGamma_F & = \Gamma(1+\delta_1) \cdots 
\Gamma(1+\delta_n) \\
& = \exp \left(-\gamma c_1 + \sum_{k=2}^\infty 
(-1)^k \zeta(k) (k-1)! \ch_k(TF)\right) 
\end{split} 
\end{align} 
where $c_1 = c_1(TF)$, $\delta_1,\dots,\delta_n$ are the Chern roots of 
$TF$ so that $c(TF) = (1+\delta_1) \cdots (1+\delta_n)$, 
$\gamma=\lim_{n\to\infty} (\sum_{k=1}^n \frac{1}{k} - \log n)$ 
is the Euler constant and $\zeta(k)$ is the Riemann zeta function. 
Let $J_F(\tau,z)$ denote the Givental $J$-function 
\cite{Givental:symplectic} of $F$: 
\[
J_F(\tau,z) = e^{\tau/z}\left(1+ \sum_{d\in H_2(F,\Z), d \neq 0}
e^{\tau \cdot d}
\sum_i  
\corr{\frac{\phi^i}{z(z-\psi)}}_{0,1,d} \phi_i  \right) 
\]
where $\tau \in H^2(F)$ and $\{\phi_i\}$ and $\{\phi^i\}$ 
are dual bases of $H^*(F)$ 
such that $\int_F \phi_i \cup \phi^j 
= \delta_i^j$. This is a generating function of genus-zero 
one-point descendant Gromov-Witten invariants. 
The \emph{Gamma conjecture I} \cite[\S 3]{GGI} says that 
we should have 
\begin{equation} 
\label{eq:Gamma-I}
[\hGamma_F] = \lim_{t\to +\infty} [J_F(c_1 \log t,1)] 
\end{equation} 
in the \emph{projective space} $\PP(H^*(F))$ of cohomology, 
where the square bracket $[\cdots]$ means a point in the projective space. 
Note that the right-hand side is determined only by counting rational 
curves (Gromov-Witten invariants) in $F$ whereas the left-hand 
side contains topological information of $F$. 

\para 
\label{para:J_DE} The $J$-function $J_F(c_1\log t,1)$ 
is a solution to the quantum differential equation 
which has irregular singularities at $t=\infty$; 
the above limit \eqref{eq:Gamma-I} detects the most dominant 
component of the 
solution $J_F$ as $t\to +\infty$. 
When the Gamma conjecture I holds, 
the Gamma class also arises as a row of the connection matrix 
between fundamental solutions around 
regular and irregular singularities of the quantum connection. 

\begin{remark} 
The (inverse) Gamma class was introduced by Libgober \cite{Libgober} motivated by an observation in mirror symmetry due to Hosono, Klemm, Theisen and Yau \cite{HKTY}. 
The $\hGamma$-integral/rational structure in quantum cohomology has been introduced by the author \cite{Iritani:Int} and Katzarkov, Kontsevich and Pantev \cite{KKP}. 
The Gamma conjecture and related conjectures have been proved in some cases. 
The Gamma conjecture I has been proved for Fano toric manifolds/toric complete intersections \cite{Iritani:Int, Iritani:periods, GI} (assuming a version of Conjecture $\cO$), Fano threefolds of Picard rank one \cite{Golyshev-Zagier}, 
Grassmannians of type A \cite{Golyshev:deresonating, GGI} and del Pezzo surfaces \cite{HKCY}. 
The Conjecture $\cO$ (which is closely related to the Gamma conjecture I) has been proved for homogeneous varieties $G/P$ \cite{Cheong-Li}. 
The Gamma conjecture II (which we do not discuss in this article but is closely related to Dubrovin's conjecture \cite{Dubrovin:ICM}) has been proved for Grassmannians of type A \cite{GGI}, Fano toric manifolds \cite{Iritani:Int, Fang-Zhou:GammaII_toric} and quadric hypersurfaces \cite{Hu-Ke:GammaII_quadric}. 
See also \cite{AvSZ,  Galkin:Apery, Golyshev:deresonating} for the Ap\'ery limits which are closely related to the Gamma conjecture I. 
\end{remark} 

%

\section{Mirror symmetric Gamma conjecture for Fano manifolds} 
Mirror symmetry for Fano manifolds (see Givental 
\cite{Givental:ICM}) gives oscillatory integral representations of 
solutions (e.g.~the $J$-function) 
to the quantum differential equation.  
Such integral representations are useful for the asymptotic analysis 
and for proving the Gamma conjecture.  
We discuss a mirror symmetry version of the Gamma conjecture 
(as studied in \cite{Iritani:Int, Iritani:periods, GI, AGIS, Iritani:toric_osc}) 
which would lead to a (partial) proof of the original (non-mirror-symmetric) Gamma conjecture. 

\para 
\label{para:MS_Gamma} 
Suppose that an $n$-dimensional Fano manifold $F$ is mirror 
to a Laurent polynomial 
$W=W(x_1,\dots,x_n)$ in $n$-variables. 
Conjecturally, we obtain $W$ by counting holomorphic discs 
with boundary on a Lagrangian torus in $F$ (see Remark \ref{rem:slag} and \cite{Hori-Vafa, Cho-Oh, Auroux, FOOO:toricI, NNU, CLLT, Tonkonog, GM, YS-Lin}). 
We assume that 
\begin{enumerate} 
\item the Newton polytope of $W$ contains the origin in its 
interior; and 
\label{W1}
\item all non-vanishing coefficients of $W$ are positive real. 
\label{W2}
\end{enumerate} 
Under these assumptions \eqref{W1}-\eqref{W2}, 
$W|_{(\R_{>0})^n}$ is a strictly convex function 
with respect to the coordinates $(\log x_1,\dots,\log x_n)$ and attains a 
global minimum at a unique point. 
\emph{Mirror symmetric Gamma conjecture} is the equality 
\begin{equation} 
\label{eq:ms-gamma}
\int_{(\R_{>0})^n} e^{-W/z} \frac{dx_1\cdots dx_n}{x_1\cdots x_n} 
= 
\int_F (z^{c_1} z^{\deg/2} J_F(0,-z)) \cup \hGamma_F 
\end{equation} 
for $z>0$, where $z^{c_1} = e^{c_1 \log z}$ and `$\deg$' 
is regarded as an endomorphism of $H^*(F)$ 
that sends $\phi_i$ to $\deg(\phi_i) \phi_i$. 
Note that the above assumptions ensure the convergence of 
the integral in the left-hand side, and that the left-hand side 
has the asymptotics $\sim e^{-T/z}$ as $z\to +0$, where 
$T =\min_{x\in (\R_{>0})^n} W(x)$. 
In this equation the parameter $\tau\in H^2(F)$ in the $J$-function 
is  zero. More generally, we conjecture that there is a family 
of Laurent polynomials $W_\tau$ 
depending on $\tau\in H^2(F)$ such that 
\begin{equation} 
\label{eq:ms-gamma_tau} 
\int_{(\R_{>0})^n} e^{-W_\tau/z} \frac{dx_1\cdots dx_n}{x_1\cdots x_n} 
= 
\int_F (z^{c_1} z^{\deg/2} J_F(\tau,-z)) \cup \hGamma_F   
\end{equation} 
whenever $\tau$ is a real class. Here we assume that 
$W=W_\tau$ satisfies \eqref{W1}-\eqref{W2} for all real 
$\tau \in H^*(F,\R)$. 
These formulae are known to be true for Fano toric manifolds or 
Fano complete intersections in them \cite{Iritani:Int,Iritani:periods} and 
the original Gamma conjecture for those spaces 
follows\footnote{To be more precise, the proof of the Gamma 
conjecture in \cite{GI} is conditional on the assumption that 
these spaces satisfy a mirror analogue of Conjecture $\cO$.} 
from such formulae, see \cite[Theorem 6.3, Theorem 7.5]{GI}. 

\begin{remark} 
As remarked above, mirror symmetry predicts that the oscillatory integrals  
and the $J$-function satisfy the same differential equation. 
Therefore, the essential content 
of the Gamma conjecture \eqref{eq:ms-gamma_tau} 
is contained in the asymptotic behaviour: 
\[
\int_{(\R_{>0})^n} e^{-W_\tau/z} \frac{dx_1\cdots dx_n}{x_1\cdots x_n} 
\sim \int_F z^{c_1} e^{-\tau} \cup \hGamma_F \qquad 
\text{as $z\to \infty$.} 
\]
\end{remark} 

\begin{remark} 
We can consider more general cycles and integrands in this conjecture. 
When a (typically non-compact) integration cycle 
$C_V \subset (\C^\times)^n$ 
is mirror to a coherent sheaf $V$ on $F$ (in the sense of homological mirror 
symmetry) and an $n$-form 
$\check{\varphi}_\tau e^{-W_\tau/z} 
\frac{dx_1\cdots dx_n}{x_1\cdots x_n}$ (with $\check{\varphi}_\tau$ 
a Laurent polynomial 
in $x_1,\dots,x_n$) is mirror to a class $\varphi\in H^*(F)$, 
we should have 
\[
\int_{C_V} \check{\varphi}_\tau  
e^{-W_\tau/z}  \frac{dx_1\cdots dx_n}{x_1\cdots x_n} 
= \int_F (z^{c_1} z^{\deg/2} \JJ_F(\tau,-z) \varphi)  \cup 
\hGamma_F \Ch(V) 
\] 
where $\JJ_F(\tau,z) \in \End(H^*(F))$ is a matrix extension of the $J$-function (which gives a fundamental solution of the quantum connection) 
\[
\JJ_F(\tau,z)\varphi = e^{\tau/z} \left ( 
\varphi + \sum_{d\in H_2(F,\Z), d\neq 0}  e^{\tau\cdot d} \sum_{j=0}^N 
\corr{\varphi, \frac{\phi^j}{z-\psi}}_{0,2,d} \phi_j \right)  
\]
and 
$\Ch(V) := (2\pi\iu)^{\deg/2} \ch(V) 
= \sum_{k\ge 0} (2\pi\iu)^k \ch_k(V)$  
is a modified Chern character. 
The formulae \eqref{eq:ms-gamma}-\eqref{eq:ms-gamma_tau} are special cases 
of this equality when 
$\varphi=1$ and $V = \cO$ because $\JJ_F(\tau,z)1 = J_F(\tau,z)$. 

The above equality with $C_V = (S^1)^n$, $V = \cO_{\rm pt}$ and $\varphi=\check{\varphi}_\tau=1$ has been proved by Tonkonog \cite{Tonkonog} for the potential function $W$ defined in terms of disc counting (see Remark \ref{rem:slag}). 
\end{remark} 

\section{Gamma conjecture for Calabi-Yau manifolds} 
The original Gamma conjecture for Fano manifolds was formulated without 
a reference to mirrors, but Gamma conjecture for 
Calabi-Yau manifolds (we treat in this paper) involves mirror symmetry. 
The Gamma conjecture 
in the Calabi-Yau case roughly speaking says that \emph{the vanishing 
cycle of the mirror family (at the singularity nearest the large complex 
structure limit) corresponds to the Gamma class 
on the symplectic side}. This would explain a motivic origin of 
the $\zeta$-values appearing in the Gamma class. 
Historically, the Gamma classes (or the zeta values) have been observed in the asymptotics of periods of Calabi-Yau manifolds 
since the beginning of mirror symmetry and closely related observations have been made 
by many people \cite{CdOGP, HKTY, Libgober, Horja, vEvS,Hosono,Golyshev:deresonating}.  

\para A version of the Gamma conjecture for Calabi-Yau manifolds 
presented in \cite[Conjecture A]{AGIS}, which originates 
from Hosono's conjecture \cite[Conjecture 2.2]{Hosono}, 
says the following. Let $Y$ be a Calabi-Yau manifold
and let $Z_\tau$ be a family of mirror 
Calabi-Yau manifolds parametrized by 
$\tau \in H^2(Y)$ equipped with a 
holomorphic volume form $\Omega_\tau$. 
When $\Omega_\tau$ is suitably normalized and 
a cycle $C_\tau\subset Z_\tau$ is mirror to a coherent sheaf $V$ on $Y$ 
in the sense of homological mirror symmetry, we expect the following 
asymptotics of periods: 
\[
\int_{C_\tau \subset Z_\tau} \Omega_\tau 
\sim \int_Y e^{-\tau} \hGamma_Y  
\Ch(V) 
\]
as $\Re (\tau \cdot d) \to -\infty$ for all non-zero effective curve classes 
$d \in H_2(Y,\Z)$, where $\Ch(V) = (2\pi\iu)^{\deg/2} \ch(V)$. 
The limit $\Re(\tau \cdot d) \to -\infty$ is known as the \emph{large radius limit}. 
When $V$ is the structure sheaf $\cO_Y$, we expect that 
$C_\tau$ is a vanishing cycle at a singularity 
``closest'' to the large complex structure limit\footnote
{However, the notion of the ``closeness'' here is unclear: 
in many examples, we can choose the closest singularity 
with respect to a given coordinate.}.  

\para The main goal 
of this paper is to derive the Calabi-Yau Gamma conjecture from 
the Fano case. 
For a Fano manifold $F$, we consider a Calabi-Yau 
manifold given either as the total space $K_F$ of the canonical bundle or 
as a smooth anticanonical section $Y\subset F$. 
It is expected that the mirror of $K_F$ 
is given by relative periods of the pair 
$((\C^\times)^n, W_\tau^{-1}(-1))$ and 
that the mirror of $Y$ is given by periods of 
(a compactification of) $W_\tau^{-1}(1)$. 

The mirror symmetric Gamma conjecture for local Calabi-Yau manifolds $K_F$ 
(with $F$ Fano toric manifolds) 
was recently studied by Wang \cite{Wang:local} using the Gross-Siebert mirrors. 
Wang considered the case where the cycle $C_\tau$ is mirror to the structure sheaf of a curve. 
The Gamma conjecture in the local case is also 
closely related to the \emph{Ap\'ery extensions} studied by 
Golyshev, Kerr and Sasaki \cite{GKS:Apery_ext}.

\para
Let $I_{K_F}$ and $I_Y$ denote the $I$-functions 
\cite{Coates-Givental} of $K_F$ and $Y$ respectively. 
\begin{align} 
\label{eq:I_K_F} 
I_{K_F}(\tau,z) &=e^{\tau/z} \sum_{d\in H_2(F,\Z)} e^{\tau\cdot d}
J_d(z) \prod_{k=0}^{c_1\cdot d-1} (-c_1 - kz) 
 \\ 
 \label{eq:I_Y} 
I_Y(\tau,z) & =  e^{\tau/z} \sum_{d\in H_2(F,\Z)} e^{\tau\cdot d} 
J_d(z) \prod_{k=1}^{c_1\cdot d} (c_1 + kz) 
\end{align} 
where $\tau \in H^2(F)$ and we expand $J_F(\tau,z) 
= e^{\tau/z} \sum_{d\in H_2(F,\Z)} e^{\tau\cdot d} J_d(z)$. 
We regard $I_{K_F}$ as an $H^*(K_F)$-valued function 
and $I_Y$ as an $H^*(Y)$-valued function. 
The quantum Lefschetz theorem of Coates-Givental \cite{Coates-Givental} 
implies that these $I$-functions are related to 
the $J$-functions of $K_F$ and $Y$ via a change of variables (mirror transformation), 
and therefore compute certain Gromov-Witten invariants of $K_F$ or $Y$. 

\begin{theorem} 
\label{thm:main} 
Let $\tau \in H^2(F,\R)$ and 
suppose that we have a Laurent polynomial $W_\tau$ satisfying 
\eqref{W1}-\eqref{W2} and the mirror symmetric Gamma 
conjecture \eqref{eq:ms-gamma_tau} at $\tau$. 
Let $s>0$ be a sufficiently large positive number. 
Then we have 
\begin{equation}
\label{eq:local-charge}  
(2\pi \iu) \int_{B_{\tau,s}} 
\frac{dx_1\cdots dx_n}{x_1\cdots x_n}  = 
\int_{K_F} I_{K_F}(\tau- c_1 \log (-s),-1) 
\cup \hGamma_{K_F}  \Ch(\cO_F) 
\end{equation} 
where $B_{\tau,s} 
= (\R_{>0})^n \cap \{W_\tau(x) \le s\}$ is a relative $n$-cycle with 
boundary in $W_\tau^{-1}(s)$ and we choose the branch $\Im(\log(-s)) = -\pi$. 
We also have 
\begin{equation} 
\label{eq:anticanonical-charge} 
s \int_{C_{\tau,s}} \frac{d\log x_1 \cdots d\log x_n}{dW_\tau} 
 = \int_Y I_Y(\tau - c_1 \log s,-1) \cup \hGamma_Y 
\end{equation} 
where $C_{\tau,s} = (\R_{>0})^n \cap W_\tau^{-1}(s)$ is an 
$(n-1)$-cycle in $W_\tau^{-1}(s)$. 
\end{theorem}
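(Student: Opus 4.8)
The plan is to connect both periods in \eqref{eq:local-charge} and \eqref{eq:anticanonical-charge} to the oscillatory integral of the mirror symmetric Gamma conjecture \eqref{eq:ms-gamma_tau} by a Laplace transformation, and then to recognize the resulting expressions as the right-hand sides built from the $I$-functions of $K_F$ and $Y$. First I would apply the coarea (Gelfand--Leray) formula to push the oscillatory integral forward along $W_\tau\colon(\R_{>0})^n\to\R$: with $g(s)=\int_{C_{\tau,s}}\frac{d\log x_1\cdots d\log x_n}{dW_\tau}$ and $h(s)=\int_{B_{\tau,s}}\frac{dx_1\cdots dx_n}{x_1\cdots x_n}$ one obtains
\[
\int_{(\R_{>0})^n}e^{-W_\tau/z}\,\frac{dx_1\cdots dx_n}{x_1\cdots x_n}=\int_T^\infty e^{-s/z}\,g(s)\,ds,\qquad h(s)=\int_T^s g(t)\,dt,
\]
where $T=\min W_\tau$. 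Thus, in the variable $u=1/z$, the conjecture \eqref{eq:ms-gamma_tau} identifies the Laplace transform of $g$, and (after dividing by $u$) of $h$, with $P(z):=\int_F(z^{c_1}z^{\deg/2}J_F(\tau,-z))\cup\hGamma_F$; so $g$ and $h$ are recovered from $P(1/u)$ and $u^{-1}P(1/u)$ by the inverse Bromwich transform.

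Next I would compute $P(z)$ explicitly. The key point is that the operator $z^{\deg/2}$, together with the dimension constraint on the descendant invariants entering $J_d$, collapses the (finite) $z$-expansion of $J_d(-z)$ to a single monomial, giving $z^{c_1}z^{\deg/2}J_F(\tau,-z)=z^{c_1}e^{-\tau}\sum_d e^{\tau\cdot d}z^{-c_1\cdot d}J_d(-1)$. Applying the elementary identity $\mathcal L^{-1}[u^{-\alpha}](s)=s^{\alpha-1}/\Gamma(\alpha)$ termwise — with $\alpha=c_1-c_1\cdot d$ for $g$ and $\alpha=c_1+1-c_1\cdot d$ for $h$, the factor $\Gamma(\alpha)^{-1}$ being interpreted through the nilpotency of $c_1$ — produces series in $s$ with coefficients $s^{c_1}/\Gamma(c_1-c_1\cdot d)$ and $s^{c_1}/\Gamma(c_1+1-c_1\cdot d)$. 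The last step is to match these with the right-hand sides of \eqref{eq:local-charge}, \eqref{eq:anticanonical-charge}: one uses $\prod_{k=0}^{c_1\cdot d-1}(-c_1-kz)\big|_{z=-1}=\Gamma(c_1\cdot d-c_1)/\Gamma(-c_1)$ and $\prod_{k=1}^{c_1\cdot d}(c_1+kz)\big|_{z=-1}=\Gamma(c_1)/\Gamma(c_1-c_1\cdot d)$ to bring in $I_{K_F}$ and $I_Y$; the Whitney-sum relations $\hGamma_{K_F}=\hGamma_F\,\Gamma(1-c_1)$ and $\hGamma_Y=j^*\!\big(\hGamma_F/\Gamma(1+c_1)\big)$ for the normal directions; Grothendieck--Riemann--Roch together with the projection formula $\int_{K_F}i_*(\mu)=\int_F\mu$ to evaluate $\int_{K_F}(\cdots)\cup\hGamma_{K_F}\Ch(\cO_F)$ (here $\Ch(\cO_F)=i_*\!\big(\tfrac{e^{2\pi\iu c_1}-1}{c_1}\big)$ for $i\colon F\hookrightarrow K_F$ the zero section), and the adjunction $\int_Y j^*\beta=\int_F\beta\cup c_1$ for the anticanonical section; and finally the reflection formula $\Gamma(w)\Gamma(1-w)=\pi/\sin\pi w$. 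It is precisely this last identity which converts $\Gamma(c_1\cdot d-c_1)\,\Ch(\cO_F)$ into a multiple of $1/\Gamma(c_1+1-c_1\cdot d)$ (accounting for the factor $2\pi\iu$ on the left of \eqref{eq:local-charge}) and which forces the branch $\Im\log(-s)=-\pi$, since one needs the combination $e^{-\iu\pi c_1}(1-e^{2\pi\iu c_1})=-2\iu\sin\pi c_1$ to emerge.

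I expect the main obstacle to be twofold. On the analytic side, the term-by-term inverse Laplace transforms are not individually supported on $[T,\infty)$ — only the sum over $d$ is, by a nontrivial cancellation — so the termwise manipulation and the interchange of $\sum_d$ with the inverse transform must be justified, either by a direct contour-deformation argument controlling the convergence of the $d$-series (the method of \cite{Iritani:periods} recast in the present generality) or by observing that both sides of \eqref{eq:local-charge} and \eqref{eq:anticanonical-charge} solve the quantum differential equations of $K_F$ and $Y$ (via quantum Lefschetz for the $I$-functions on one side and the Picard--Fuchs equations for relative/fibre periods on the other) and matching leading asymptotics near the large radius limit; this is where the hypothesis that $s$ be sufficiently large enters. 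On the algebraic side, the bookkeeping in the final matching — reconciling $\Ch(\cO_F)$, the non-compactness of $K_F$, the $\Gamma$-reflection, the $(2\pi\iu)^{\deg/2}$-twists and the prescribed branch — is delicate, and is what pins down the precise constants and the shifts $\tau-c_1\log(-s)$ and $\tau-c_1\log s$ appearing in the statement.
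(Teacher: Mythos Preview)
Your proposal is correct and rests on the same underlying idea as the paper---Laplace transformation between the oscillatory integral and the period integrals---but the execution differs. You first invoke the Gelfand--Leray/coarea formula to recognise the oscillatory integral itself as the Laplace transform (in $u=1/z$) of $g(s)$, and then recover $g$ and $h$ by a termwise Bromwich inversion $\mathcal L^{-1}[u^{-\alpha}](s)=s^{\alpha-1}/\Gamma(\alpha)$. The paper instead applies a \emph{further} Laplace transform in $t=1/z$ to obtain the Stieltjes/Cauchy transform $\hcI(s)=\int_{(\R_{>0})^n}(W_\tau-s)^{-1}\,\frac{dx}{x}$, integrates once in $s$ to get $\cM(s)=\int(\log\tfrac{W_\tau+M}{W_\tau-s})\frac{dx}{x}$, and then computes the jump of $\cM$ across the branch cut $[T_\tau,\infty)$; this jump is exactly $(2\pi\iu)h(s)$ by an elementary pointwise calculation and dominated convergence. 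The two inversions are of course equivalent in principle (the jump of the Cauchy transform is the Sokhotski--Plemelj avatar of Bromwich), but the paper's route sidesteps precisely the obstacle you flag: the individual terms of the $d$-series have no honest inverse Laplace transform supported on $[T_\tau,\infty)$, whereas on the Stieltjes side the termwise computation of $\hcI(s)$ is a genuine convergent sum for $s\ll 0$ (justified by the estimate $|J_{d,i}|\le C_1 C_2^{c_1\cdot d}/(c_1\cdot d)!$), and the jump calculation then needs only a single application of dominated convergence. Your algebraic matching (Whitney sums for $\hGamma_{K_F}$, $\hGamma_Y$; GRR/projection for $\Ch(\cO_F)$; the reflection formula; the branch $\Im\log(-s)=-\pi$) is the same as the paper's, and your observation that \eqref{eq:anticanonical-charge} is the $s$-derivative of \eqref{eq:local-charge} is also how the paper proceeds.
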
 

\begin{remark} 
$C_{\tau,s}$ is the vanishing cycle at $s = T_\tau
:= \min_{x\in (\R_{>0})^n} W_\tau(x)$. 
In this theorem, $s$ needs to be (at least) greater than $T_\tau$ 
(otherwise the cycles $B_{\tau,s}, C_{\tau,s}$ are empty). 
\end{remark} 

\begin{remark} 
$\cO_F$ means the structure sheaf of the zero-section $F \subset K_F$. 
The ($2\pi\iu$-modified) Chern character $\Ch(\cO_F)$ lies in the compactly supported 
cohomology of $K_F$, and hence the integral in the right-hand side 
of \eqref{eq:local-charge} makes sense. 
\end{remark} 

\begin{remark} 
On the right-hand side of 
\eqref{eq:local-charge}-\eqref{eq:anticanonical-charge}, 
the argument $z$ in the $I$-function is specialized 
to $-1$. Note however that $z^{c_1(TX)} 
z^{\deg/2} I_X(\tau,-z) = I_X(\tau,-1)$ for $X=K_F$ or $Y$.  
\end{remark} 

\begin{corollary} 
Suppose that we have a family 
$\{W_\tau\}_{\tau\in H^2(F,\R)}$ 
of Laurent polynomial mirrors satisfying \eqref{W1}-\eqref{W2} and 
the mirror symmetric Gamma conjecture \eqref{eq:ms-gamma_tau}.  
Let $B_{\tau,-1}$ denote the parallel translate of $B_{\tau,s}$ 
along an anti-clockwise path in the $s$-plane (increasing $\arg s$ by $\pi$). 
It is a relative cycle with boundary in $\{W_\tau(x)+1=0\}$.  
Then we have the asymptotics 
\begin{align*}
(2\pi \iu) \int_{B_{\tau,-1}} \frac{dx_1\cdots dx_n}{x_1\cdots x_n} 
& \sim \int_{K_F} e^{-\tau} \hGamma_{K_F} \Ch(\cO_F) \\ 
 \int_{C_{\tau,1}} \frac{d \log x_1\cdots d\log x_n}{d W_\tau} 
& \sim \int_Y e^{-\tau} \hGamma_Y 
\end{align*} 
as $\Re(\tau\cdot d) \to -\infty$ for all non-zero effective curve classes 
$d\in H_2(F,\Z)$. 
\end{corollary}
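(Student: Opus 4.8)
The plan is to obtain both asymptotics from the exact identities of Theorem~\ref{thm:main} by analytically continuing in the parameter $s$ and then extracting the behaviour of the $I$-functions near the large radius limit.

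For the local statement, recall that \eqref{eq:local-charge} holds for all sufficiently large $s>0$. As $s$ ranges over $\C$ with the finitely many critical values of $W_\tau$ on $(\C^\times)^n$ removed, the relative homology groups $H_n((\C^\times)^n, W_\tau^{-1}(s))$ form a Gauss--Manin local system, the classes $[B_{\tau,s}]$ extend along it as flat sections, and both sides of \eqref{eq:local-charge} become (multivalued) holomorphic in $s$. Since the critical values are bounded, one may continue along an arc $|s|=s_0$ (with $s_0$ large) on which $\arg s$ increases by $\pi$, and then along the negative real axis down to $s=-1$; the resulting relative cycle is the $B_{\tau,-1}$ of the corollary, with boundary in $\{W_\tau+1=0\}$. (For generic $\tau$ there is no critical value on $\R_{<0}$, so the path stays clear of them; the general case follows by continuity in $\tau$.) Carrying the branch $\Im\log(-s)=-\pi$ along this path gives $\log(-s)\to 0$ at $s=-1$, so \eqref{eq:local-charge} continues to
\[
(2\pi\iu)\int_{B_{\tau,-1}}\frac{dx_1\cdots dx_n}{x_1\cdots x_n}=\int_{K_F} I_{K_F}(\tau,-1)\cup\hGamma_{K_F}\,\Ch(\cO_F).
\]
For the anticanonical statement, \eqref{eq:anticanonical-charge} holds for large $s>0$, and both of its sides are analytic in $s$ on the interval $(T_\tau,\infty)$, where $T_\tau=\min_{x\in(\R_{>0})^n}W_\tau(x)$ is the only critical value of $W_\tau|_{(\R_{>0})^n}$ and $C_{\tau,s}$ runs through a smooth family of $(n-1)$-cycles. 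For $\tau$ deep in the large radius region one has $T_\tau<1$ (here $T_\tau$ is small, and in any case $C_{\tau,1}$ must be non-empty for the assertion to hold), so the identity may be continued along $(T_\tau,\infty)$ to $s=1$, where $\log s=0$ and \eqref{eq:anticanonical-charge} becomes
\[
\int_{C_{\tau,1}}\frac{d\log x_1\cdots d\log x_n}{dW_\tau}=\int_Y I_Y(\tau,-1)\cup\hGamma_Y.
\]

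It remains to take the large radius limit on the right-hand sides. In \eqref{eq:I_K_F} and \eqref{eq:I_Y} the $d=0$ summand equals $e^{\tau/z}$ (since $J_0\equiv 1$ and the product is empty), which at $z=-1$ is $e^{-\tau}$, while each non-zero effective $d$ contributes a fixed polynomial in the Chern classes multiplied by $e^{\tau\cdot d}$. Hence $I_{K_F}(\tau,-1)-e^{-\tau}=\sum_{d\neq 0}e^{\tau\cdot d}\,p_d$ in $H^*(K_F)$ and $I_Y(\tau,-1)-e^{-\tau}=\sum_{d\neq 0}e^{\tau\cdot d}\,q_d$ in $H^*(Y)$, with $p_d,q_d$ fixed and the sums convergent for $\tau$ deep in the large radius region; all correction terms tend to $0$ as $\Re(\tau\cdot d)\to-\infty$ for every non-zero effective $d$, whereas $\int_{K_F}e^{-\tau}\hGamma_{K_F}\Ch(\cO_F)$ and $\int_Y e^{-\tau}\hGamma_Y$ are non-zero polynomials in $\tau$. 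Pairing the two displays above with $\hGamma_{K_F}\Ch(\cO_F)\in H^*_c(K_F)$ and with $\hGamma_Y$ respectively then gives the asserted asymptotics.

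\textbf{Main obstacle.} The crux is the continuation in $s$ in the local case: one must control the critical values of $W_\tau$ on the whole torus $(\C^\times)^n$ (not merely on $(\R_{>0})^n$) so that the path from $s\gg 0$ to $s=-1$ can be chosen to avoid them, verify that the transported relative cycle is exactly the $B_{\tau,-1}$ of the corollary, and bookkeep the branch of $\log(-s)$ correctly along the way. Once the identities at $s=\mp1$ are in hand, the passage to the limit is routine, each correction term being a single exponential $e^{\tau\cdot d}$ with a fixed cohomological coefficient.
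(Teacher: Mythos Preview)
The paper states this Corollary without proof, as an immediate consequence of Theorem~\ref{thm:main}; your derivation---analytically continuing \eqref{eq:local-charge} and \eqref{eq:anticanonical-charge} in $s$ to $s=-1$ (along the prescribed anti-clockwise path, so that $\log(-s)\to 0$) and to $s=1$ respectively, and then isolating the $d=0$ term of the $I$-functions at the large radius limit---is exactly the intended argument. The only slip is cosmetic: the $d\neq 0$ contribution to $I_{K_F}(\tau,-1)$ is $e^{-\tau}e^{\tau\cdot d}$ times a fixed cohomology class rather than $e^{\tau\cdot d}$ times a fixed class, but since $e^{-\tau}$ is polynomial in $\tau$ this does not affect the asymptotic conclusion.
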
 

\begin{example} 
\label{exa:toric} 
As we mentioned earlier, the mirror symmetric Gamma conjecture 
holds for a Fano toric variety. 
Let $F$ be a Fano toric variety and let 
$D_1,\dots,D_c$ be all prime toric divisors of $F$. 
We write $b_j \in \Z^n$ for the primitive vector of the 1-dimensional 
cone of the fan for $F$ corresponding to $D_j$. 
The $J$-function of $F$ is given by \cite{Givental:toric} 
\[
J_F(\tau,z) = e^{\tau/z} \sum_{d\in H_2(F,\Z)} e^{\tau\cdot d} 
\prod_{j=1}^c \frac{\prod_{k=-\infty}^0 D_j + kz}{ 
\prod_{k=-\infty}^{D_j\cdot d} D_j+kz}. 
\]
The Laurent polynomial $W(x) = \sum_{j=1}^c 
e^{-\lambda_j} x^{b_j}$ satisfies the mirror symmetric Gamma conjecture 
\eqref{eq:ms-gamma_tau} at $\tau = -\sum_{j=1}^c \lambda_j D_j$ 
\cite[Theorem 4.14]{Iritani:Int}. 
By applying the first half of Theorem \ref{thm:main} to $F$, 
we obtain (when $\sum_{j=1}^c \lambda_j D_j$ is sufficiently ample) 
\[
\vol \left\{ t\in\R^n : \sum_{j=1}^c e^{-\lambda_j + \corr{t, b_j}} \le 1  
\right\} 
= \int_{F} \sum_{d\in H_2(F,\Z)} 
e^{\sum_{j=1}^c \lambda_j (D_j - D_j \cdot d)}
\frac{\prod_{j=1}^c D_j \Gamma(D_j - D_j\cdot d)}
{\Gamma(c_1+1-c_1\cdot d)} 
\] 
where $\vol$ stands for the Euclidean volume 
and $c_1 = D_1+\cdots +D_c$. 
The leading asymptotics at the large-radius limit $\sum_{j=1}^c \lambda_j D_j\to \infty$ gives the classical  Duistermaat-Heckman formula 
\[
\vol(P_\lambda) = \int_{F} e^{\sum_{j=1}^c \lambda_j D_j} 
\]
where $P_\lambda = \{ t \in \R^n : \corr{t,b_j} \le \lambda_j, j=1,\dots,c\}$ 
is the moment polytope of $F$ with respect to a symplectic 
form in the class $\sum_{j=1}^c \lambda_j D_j$. 
\end{example} 

\section{Laplace transformation and a proof of Theorem \ref{thm:main}} 
\label{sec:Lap_proof}
We assume the mirror symmetric Gamma conjecture \eqref{eq:ms-gamma_tau}. 
Let $\cI(z)$ denote 
the quantity in \eqref{eq:ms-gamma_tau} 
and set $t = 1/z$: 
\begin{align} 
\label{eq:oscint} 
\begin{split}
\cI(1/t) 
& = \int_F (t^{-c_1} t^{-\deg/2} J_F(\tau,-1/t) )\cup \hGamma_F \\ 
& = \int_{(\R_{>0})^n} e^{ - t W_\tau } \frac{dx_1\cdots dx_n}
{x_1\cdots x_n}.
\end{split} 
\end{align} 
We calculate the Laplace transform of $\cI(1/t)$  
\[
\hcI(s) = 
\int_0^\infty e^{st}  \cI(1/t) dt 
\]
for negative real $s<0$. 
This integral is convergent because $\cI(t)$ 
is $O((\log t)^n)$ as $t\to 0$ 
by the first formula of \eqref{eq:oscint} 
(see also \eqref{eq:J_degrade} below) and 
$\cI(t)$ has exponential decay as $t\to +\infty$ by 
the second formula of \eqref{eq:oscint}. 
Using the first formula of \eqref{eq:oscint}, we obtain the following. 

\begin{proposition}[cf.~{\cite[Proposition 5.1]{Iritani:periods}}]
\label{prop:hcI_series} 
For a sufficiently negative $s\ll 0$, we have 
\[
\hcI(s) = \frac{1}{-s}\int_{F} \tI_Y(\tau - c_1 \log s,-1) \cup 
e^{-\pi \iu c_1} \Gamma(1-c_1)  \hGamma_F
\]
where $\tI_Y(\tau,z)$ is defined by the same formula \eqref{eq:I_Y} 
as $I_Y(\tau,z)$ 
but is regarded as a function taking values in $H^*(F)$ $($instead of in $H^*(Y))$ 
and we choose the branch $\Im \log s = \pi$ 
\end{proposition}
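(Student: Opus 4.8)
The plan is to compute $\hcI(s)$ directly from the first expression in \eqref{eq:oscint} by exchanging the order of integration and performing the $t$-integral term by term after expanding the $J$-function. Write $J_F(\tau,z) = e^{\tau/z}\sum_{d} e^{\tau\cdot d} J_d(z)$ and feed this into the cohomology-valued integrand $t^{-c_1} t^{-\deg/2} J_F(\tau,-1/t)\cup\hGamma_F$. Because $z=-1/t$, the factor $e^{\tau/z}$ becomes $e^{-t\tau}$, and $t^{-c_1} t^{-\deg/2}$ acting on each graded piece produces explicit powers of $t$; the upshot is that $\cI(1/t)$ is, degree by degree in $H^*(F)$, a sum of terms of the shape $(\text{power of }t)\cdot e^{-t\tau}$ times a class, with the powers of $t$ controlled by $c_1\cdot d$ and the cohomological degree. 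First I would make this bookkeeping precise: isolate the contribution of each curve class $d$ and each Chern-degree component, so that the $t$-dependence of each summand is a monomial $t^{\alpha}$ (possibly with $\log t$'s absorbed into the nilpotent part of $z^{-c_1}$, i.e.\ into $e^{-c_1\log t}$, which when paired against $\hGamma_F$ only shifts the cohomological class).

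Next I would carry out the Laplace integral on each such monomial term. Since $s<0$, the integral $\int_0^\infty e^{st} t^{\alpha} e^{-t\tau}\,dt$ — interpreting $e^{-t\tau}$ via the nilpotent expansion in $\tau$ — is a Gamma-function integral: $\int_0^\infty e^{(s)t} t^{\alpha}dt = \Gamma(\alpha+1)/(-s)^{\alpha+1}$, with the nilpotent class $\tau$ (and the $\log t$ from $e^{-c_1\log t}$) handled by differentiating this identity or by the standard substitution $t \mapsto t/(-s+\text{nilpotent})$. Collecting the resulting $\Gamma$-factors and powers of $(-s)$ across all $d$ and all degrees is exactly the operation that reconstitutes the product $\prod_{k=1}^{c_1\cdot d}(c_1+kz)$ appearing in $I_Y$ (at $z=-1$), together with the prefactor $e^{-\pi\iu c_1}\Gamma(1-c_1)$ coming from the $d=0$, bottom-degree term and the choice of branch $\Im\log s = \pi$ (so that $(-s)^{c_1} = e^{c_1(\log s - \pi\iu)}$, explaining the $e^{-\pi\iu c_1}$). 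The substitution $\tau - c_1\log s$ for the $J$-function argument arises because each $e^{-t\tau}e^{t\cdot(\text{stuff})}$ after the $t\mapsto t/(-s)$ rescaling produces $e^{\tau\cdot d}(-s)^{-\tau\cdot d - \dots}$, i.e.\ the shift $e^{\tau\cdot d}\mapsto e^{(\tau - c_1\log s)\cdot d}$ up to the overall factors already accounted for. This is essentially the computation of \cite[Proposition 5.1]{Iritani:periods}, which I would follow closely, and the main content is matching the combinatorial shape of the $\Gamma$-factors with the quantum-Lefschetz product in \eqref{eq:I_Y}.

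Finally, I would justify the interchange of integrals and the convergence: for $s\ll 0$ the term-by-term Laplace integrals converge absolutely (each $\alpha+1>0$ once $s$ is negative enough, since the degree-zero part of $z^{-c_1}$ contributes $t^{-c_1}$ whose "effective exponent" is the nilpotent $-c_1\log t$ — harmless — and the genuine negative powers of $t$ are excluded because $\cI(1/t)=O((\log t)^n)$ as $t\to 0$, as already noted in the text), and the series over $d$ converges since Gromov-Witten invariants grow subexponentially while the $(-s)^{-c_1\cdot d}$ factors decay; Fubini then applies. The one genuine subtlety — and the step I expect to be the main obstacle — is the careful treatment of the branch of $\log s$ and the nilpotent logarithm $e^{-c_1\log t}$ simultaneously: one must track how the $t$-integration contour's endpoint at $t=0$ interacts with the multivalued factor, and verify that the prescription $\Im\log s = \pi$ is precisely what makes the identity $\int_0^\infty e^{st}t^{-c_1}\cdots dt$ produce $e^{-\pi\iu c_1}\Gamma(1-c_1)(-s)^{c_1-1}\cdots$ with the correct branch, consistently across all graded pieces. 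Everything else is a matter of organizing the expansion so that the output manifestly equals $\tfrac{1}{-s}\int_F \tI_Y(\tau - c_1\log s,-1)\cup e^{-\pi\iu c_1}\Gamma(1-c_1)\hGamma_F$.
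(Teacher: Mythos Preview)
Your overall strategy is exactly the paper's: expand $J_F$ over curve classes, compute the Laplace integral termwise, and recognize the output as $\tI_Y$.  However, the bookkeeping you describe in the middle contains a concrete error that would derail the computation if carried out as written.

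You say each summand has the shape $t^{\alpha}\cdot e^{-t\tau}\cdot(\text{class})$ and then propose to evaluate $\int_0^\infty e^{st}t^{\alpha}e^{-t\tau}\,dt$ with $\tau$ treated as nilpotent.  This mishandles the grading operator.  The operator $t^{-\deg/2}$ is multiplicative on $H^*(F)$, and since $(-t\tau)^k/k!$ sits in degree $2k$ it acquires a factor $t^{-k}$; hence $t^{-\deg/2}e^{-t\tau}=e^{-\tau}$, with \emph{no} residual $t$-dependence.  The clean identity (which is the whole point of the homogeneity step) is
\[
t^{-c_1}t^{-\deg/2}J_F(\tau,-1/t)
=e^{-\tau}\sum_{d}e^{\tau\cdot d}J_d(-1)\,t^{-c_1+c_1\cdot d},
\]
so the only $t$-dependence to integrate is $t^{-c_1+c_1\cdot d}$ with $-c_1$ nilpotent.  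Then $\int_0^\infty e^{st}t^{-c_1+c_1\cdot d}dt=(-s)^{c_1-c_1\cdot d-1}\Gamma(1+c_1\cdot d-c_1)$, and writing $\Gamma(1+c_1\cdot d-c_1)=\Gamma(1-c_1)\prod_{k=1}^{c_1\cdot d}(k-c_1)$ together with $(-s)^{c_1}=s^{c_1}e^{-\pi\iu c_1}$ (for $\Im\log s=\pi$) gives the stated formula directly.  No substitution $t\mapsto t/(-s+\text{nilpotent})$ or differentiation trick is needed, and the shift $\tau\mapsto\tau-c_1\log s$ comes purely from the factor $s^{-c_1\cdot d}$, not from any interaction with $e^{-t\tau}$.

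One further point on the justification: ``Gromov--Witten invariants grow subexponentially'' is not the right estimate and would not suffice here, because the termwise Laplace transform introduces a $\Gamma(1+c_1\cdot d-c_1)\sim(c_1\cdot d)!$ factor.  What is actually needed is the factorial \emph{decay} $|J_{d,i}|\le C_1C_2^{c_1\cdot d}/(c_1\cdot d)!$ coming from convergence of the small quantum product; this bounds the partial sums of the display above by $C_3(\tau)e^{C_4(\tau)t}$ uniformly in the truncation, after which dominated convergence applies for $s<-C_4(\tau)$.  (Also, your remark ``each $\alpha+1>0$ once $s$ is negative enough'' is off: integrability at $t=0$ depends on $\alpha$, not on $s$; here it holds because the scalar part of the exponent is $c_1\cdot d\ge 0$ and the $-c_1$ part only contributes powers of $\log t$.)
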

\begin{proof} 
We expand $J_F(\tau,z) = 
e^{\tau/z} \sum_{d\in H_2(F,\Z)}e^{\tau\cdot d} J_d(z) $. 
The homogeneity of Gromov-Witten invariants implies that 
$J_d(z) = \sum_i J_{d,i}  z^{-c_1\cdot d-\deg \phi_i/2}\phi_i$ 
for some $J_{d,i} \in \C$. This shows: 
\begin{equation} 
\label{eq:J_degrade}
t^{-c_1} t^{-\deg/2} J_F(\tau,-1/t) = 
e^{-\tau}\sum_{d\in H_2(F,\Z)} e^{\tau\cdot d}
J_d(-1)   t^{-c_1 + c_1\cdot d}. 
\end{equation} 
We compute the Laplace transform termwise, using the formula 
\begin{align*} 
\int_0^\infty e^{st} t^{-c_1+c_1 \cdot d} dt 
 & = (-s)^{c_1-c_1\cdot d-1} \Gamma(1+c_1\cdot d-c_1) \\ 
 & = \frac{1}{-s} (-s)^{c_1} s^{-c_1\cdot d} 
 \Gamma(1-c_1) \prod_{k=1}^{c_1\cdot d}(k-c_1). 
\end{align*} 
Then we arrive at the right-hand side of the proposition. 
It suffices to show that we can interchange the sum and the integral 
for $s\ll 0$. 
The convergence of the small quantum product of the Fano manifold 
$F$ implies the estimate (see \cite[Lemma 4.1]{Iritani:coLef}) 
\begin{equation}
\label{eq:estimate_J} 
|J_{d,i}| \le C_1 \frac{C_2^{c_1\cdot d}}{(c_1\cdot d)!}
\end{equation} 
for some $C_1,C_2>0$. 
Hence a partial sum has the estimate 
\[
\left| e^{-\tau} 
\sum_{d\in H_2(F,\Z), c_1\cdot d \le N} e^{\tau \cdot d} 
J_d(-1) t^{-c_1+c_1 \cdot d} \right| 
\le C_3(\tau) e^{C_4(\tau) t} \quad (\forall N>0)
\]
for some $C_3(\tau), C_4(\tau)>0$ 
and a norm $|\cdot|$ on $H^*(F)$. 
Therefore the sum and the integral can be interchanged 
when $s<-C_4(\tau)$ by the dominated convergence theorem. 
\end{proof} 

Next using the second formula of \eqref{eq:oscint}, we obtain the following. 
\begin{proposition}[cf.~proof of {\cite[Proposition 5.9]{Iritani:periods}}]
\label{prop:Hilbert_trans} 
For $s<0$, we have 
\[
\hcI(s) = \int_{(\R_{>0})^n} 
\frac{1}{W_\tau(x)-s} \frac{dx_1 \cdots dx_n}{x_1\cdots x_n}
\]
Moreover, the right-hand side makes sense as an analytic function of  
$s\in \C\setminus [T_\tau,\infty)$ where $T_\tau = \min_{x\in (\R_{>0})^n} 
(W_\tau(x)) >0$.  
\end{proposition}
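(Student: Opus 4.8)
The plan is to start from the second formula in \eqref{eq:oscint}, namely $\cI(1/t) = \int_{(\R_{>0})^n} e^{-tW_\tau}\,\frac{dx_1\cdots dx_n}{x_1\cdots x_n}$, substitute this into the definition $\hcI(s) = \int_0^\infty e^{st}\cI(1/t)\,dt$, and perform the $t$-integral first by Fubini. For fixed $x\in(\R_{>0})^n$ and $s<0<T_\tau\le W_\tau(x)$, the inner integral is $\int_0^\infty e^{(s - W_\tau(x))t}\,dt = \frac{1}{W_\tau(x) - s}$, since the exponent $s - W_\tau(x)$ is strictly negative. This formally yields the stated identity $\hcI(s) = \int_{(\R_{>0})^n}\frac{1}{W_\tau(x)-s}\,\frac{dx_1\cdots dx_n}{x_1\cdots x_n}$.

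The first technical point is to justify the Fubini interchange. I would bound $\int_0^\infty\!\int_{(\R_{>0})^n} e^{st}\,|e^{-tW_\tau}|\,\frac{|dx|}{|x|}\,dt = \int_0^\infty e^{st}\cI(1/t)\,dt$, which is already known to converge (this was established in the paragraph preceding Proposition \ref{prop:hcI_series}: $\cI(1/t) = O((\log t)^n)$ as $t\to 0$ and decays exponentially as $t\to+\infty$). Since all integrands are nonnegative for $s<0$, Tonelli applies directly and the interchange is automatic; the finiteness of the double integral is exactly the convergence of $\hcI(s)$ already noted.

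The second and more substantive point is the analytic continuation claim: that $g(s) := \int_{(\R_{>0})^n}\frac{1}{W_\tau(x)-s}\,\frac{dx_1\cdots dx_n}{x_1\cdots x_n}$ defines a holomorphic function of $s$ on $\C\setminus[T_\tau,\infty)$. For $s\in\C\setminus[T_\tau,\infty)$, the denominator $W_\tau(x) - s$ never vanishes because $W_\tau(x)\in[T_\tau,\infty)$ on $(\R_{>0})^n$ (using assumptions \eqref{W1}--\eqref{W2}, which make $W_\tau|_{(\R_{>0})^n}$ attain its minimum $T_\tau>0$ and blow up at the boundary/infinity). Holomorphy in $s$ follows from differentiation under the integral sign, so the real issue is integrability near the two ends: as $x$ approaches the boundary of $(\R_{>0})^n$ or infinity, $W_\tau(x)\to+\infty$, so $\frac{1}{W_\tau(x)-s}$ decays; more precisely $W_\tau(x)^{-1}\,\frac{dx}{x}$ is integrable at infinity because $W_\tau$ grows at least linearly in $|\log x|$ (the Newton polytope contains $0$ in its interior, so $W_\tau$ dominates $c\sum_j(|x_j| + |x_j|^{-1})$ up to constants, giving exponential growth in the logarithmic coordinates and hence integrability of the reciprocal against $d\log x_1\cdots d\log x_n$ on each of the $2n$ "directions to infinity"). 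I would make this precise by passing to coordinates $u_j = \log x_j$ and using the strict convexity and coercivity of $W_\tau(e^u)$ in $u\in\R^n$: the level sets $\{W_\tau(e^u)\le R\}$ are compact convex bodies whose Euclidean volume grows polynomially (in fact like a fixed power) in $R$, so $\int_{\R^n}\frac{du}{W_\tau(e^u) - s}$ converges by a layer-cake estimate, uniformly for $s$ in compact subsets of $\C\setminus[T_\tau,\infty)$. That uniform local bound gives both well-definedness and holomorphy, and letting $s\to[T_\tau,\infty)$ from either side shows the natural boundary is exactly $[T_\tau,\infty)$ (the integral develops the expected singularity there).

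The main obstacle I anticipate is the coercivity/volume-growth estimate for the level sets of $W_\tau(e^u)$ in the logarithmic coordinates, needed to control the integral near infinity uniformly in $s$. This is where assumptions \eqref{W1}--\eqref{W2} enter essentially: one must show that $W_\tau(e^u)\ge \epsilon\|u\| - C$ for some $\epsilon, C>0$, which follows because the support function of the Newton polytope is positive in every direction (the origin being interior) and all coefficients are positive, so no cancellation occurs. Once that linear-growth lower bound is in hand, the reciprocal is dominated by an integrable function $\min(1/(T_\tau - \Re s), 1/(\epsilon\|u\| - C - |s|))$ on $\R^n$, uniformly for $s$ in any compact set avoiding $[T_\tau,\infty)$, and the conclusion follows from the standard holomorphy-under-the-integral criterion.
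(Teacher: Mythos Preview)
Your argument is correct, and the first half---Tonelli for the nonnegative integrand followed by integrating out $t$---is exactly what the paper does. For the analytic continuation to $\C\setminus[T_\tau,\infty)$ you propose a direct coercivity estimate $W_\tau(e^u)\ge \epsilon\|u\|-C$ in logarithmic coordinates and a layer-cake bound; this works, but the paper takes a shorter route that bootstraps from the convergence already established at one real point. Namely, for any $s\in\C\setminus[T_\tau,\infty)$ and $x\in(\R_{>0})^n$ one has
\[
\left|\frac{1}{W_\tau(x)-s}\right|\le \left(\sup_{u\ge T_\tau}\left|\frac{u+1}{u-s}\right|\right)\frac{1}{W_\tau(x)+1},
\]
and the supremum is finite and locally bounded in $s$. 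Since the integral of $\frac{1}{W_\tau(x)+1}\frac{dx}{x}$ over $(\R_{>0})^n$ is exactly $\hcI(-1)<\infty$ (already known), this single inequality gives both convergence and holomorphy on all of $\C\setminus[T_\tau,\infty)$ without any analysis of the Newton polytope. Your approach is more self-contained---it would establish convergence from scratch without appealing to the prior finiteness of $\hcI$---whereas the paper's comparison trick is shorter precisely because that finiteness is already in hand.
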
 
\begin{proof} 
We have 
\[
\hcI(s) = \int_0^\infty e^{st} dt \int_{(\R_{>0})^n} e^{-tW_\tau} 
\frac{dx_1 \cdots dx_n}{x_1\cdots x_n}
\]
Since the integrand $e^{st} e^{-t W_\tau}$ is positive, 
we can use the Fubini theorem and perform the integration in $t$ first. 
This gives the expression in the proposition. 
The integral on the right-hand side converges for all $s \in \C \setminus [T_\tau,\infty)$ 
because of the estimate 
\begin{equation} 
\label{eq:bound_uniform_in_s}
\left|\frac{1}{W_\tau(x)-s} \right| \le 
\left(\sup_{u\ge T_\tau} \left| \frac{u+1}{u-s}\right| \right) \frac{1}{W_\tau(x)+1} 
\qquad x\in (\R_{>0})^n 
\end{equation}
and the fact that it converges at $s=-1$. 
\end{proof} 

\begin{remark} When we choose $F$ to be a degree $(1,\dots,1)$ hypersurface in $(\PP^1)^{l+1}$, 
the integral expression for $\hcI(s)$ in Proposition \ref{prop:Hilbert_trans} gives the $l$-loop Banana Feynman amplitude \cite{Vanhove:exp,BFKNS,Iritani:Feynman}. 
\end{remark} 

\begin{proof}[Proof of Theorem \ref{thm:main}] 
Choose a sufficiently large positive number $M\gg 0$.  
We consider the integral of $\hcI(\tau,s)$ in $s$ along a path 
which starts from $s=-M$ and is contained 
in the region $\C \setminus [T_\tau,\infty)$. 
\[
\cM(s) := \int_{-M}^s \hcI(s') ds'  
\]
Using the formula in Proposition \ref{prop:Hilbert_trans} and the Fubini theorem\footnote
{The Fubini theorem applies because $s\mapsto \int_{(\R_{>0})^n} \frac{1}{|W_\tau(x)-s|} 
\frac{dx_1\cdots dx_n}{x_1\cdots x_n}$ is continuous on 
$\C\setminus [T_\tau,\infty)$ (by the estimate \eqref{eq:bound_uniform_in_s}), 
and therefore integrable along any compact path in 
$\C\setminus [T_\tau,\infty)$. }, 
we integrate out $s'$ to find 
\[
\cM(s) = \int_{(\R_{>0})^n} \log\left( \frac{W_\tau(x)+M}{W_\tau(x)-s} \right)  
\frac{dx_1 \cdots dx_n}{x_1\cdots x_n}.  
\]
The function $\cM(s)$ is analytic for $s\in \C\setminus [T_\tau,\infty)$. 
We compute the `jump' of this function across the branch cut $[T_\tau,\infty)$. 
For $f(s) = \log((W_\tau(x)+M)/(W_\tau(x)-s))$, we have
\[
\lim_{\epsilon \to +0} 
(f(s+\iu \epsilon) - f(s-\iu \epsilon)) = \begin{cases} 
2\pi \iu & \text{if $W_\tau(x) < s$} \\
0 & \text{if $W_\tau(x) >s$}  
\end{cases}
\]
Moreover, $|f(s+\iu \epsilon)- f(s-\iu \epsilon)|$ 
is bounded by an integrable function independent 
of $\epsilon \in (0,1)$, i.e.~when $0<\epsilon<1$, 
\[
|f(s+\iu \epsilon) - f(s-\iu \epsilon)| 
= \left| \log\frac{W_\tau(x)-s+\iu \epsilon}{W_\tau(x) -s - \iu \epsilon}
\right| \le 
\begin{cases} 
2 \pi & \text{if $W_\tau(x)\le s+1$}  \\
2/(W_\tau(x)-s) &\text{if $W_\tau(x) \ge s+1$} 
\end{cases}
\]
The function on the right-hand side is bounded by a constant multiple 
of $1/(W_\tau(x)+1)$ on $(\R_{>0})^n$, and hence is integrable 
with respect to the measure $\frac{dx_1\cdots dx_n}{x_1\cdots x_n}$. 
Therefore the jump of $\cM(s)$ is given by 
\begin{equation} 
\label{eq:cM_jump} 
\lim_{\epsilon \to +0} 
(\cM(s+\iu \epsilon) - \cM(s-\iu \epsilon)) = 
2\pi \iu 
\int_{(\R_{>0})^n \cap \{W_\tau(x) \le s\} }  \frac{dx_1 \cdots dx_n}{x_1\cdots x_n}. 
\end{equation} 
for $s>T_\tau$. This equals the left-hand side of \eqref{eq:local-charge}. 
On the other hand, Proposition \ref{prop:hcI_series} gives a convergent 
power series expansion of $\hcI(s)$ around $s=\infty$. 
By integrating the power series termwise, we get for $|s|\gg 0$, 
\[
\cM(s) = - \int_F\sum_{d\in H_2(F,\Z)} e^{-\tau+\tau \cdot d} 
\tI_d(-1) \frac{s^{c_1-c_1 \cdot d} -(-M)^{c_1-c_1\cdot d}}
{c_1-c_1\cdot d} 
\cup e^{-\pi \iu c_1} \Gamma(1-c_1) \hGamma_F 
\]
where we write $\tI_Y(\tau,z) = e^{\tau/z} 
\sum_{d\in H_2(F,\Z)} e^{\tau \cdot d} \tI_d(z)$.  
Here the branch of $s^a = \exp(a \log s)$ is chosen 
so that $\Im \log s \in (0,2\pi)$. 
From this we compute, for $s\gg 0$, 
\begin{align} 
\nonumber 
&\lim_{\epsilon \to +0} 
(\cM(s+\iu \epsilon) - \cM(s-\iu \epsilon))  \\ 
\label{eq:cM_jump_series}
& = 
- \int_F \sum_{d\in H_2(F,\Z)} e^{-\tau+\tau \cdot d} \tI_d(-1) 
\frac{(1-e^{2\pi \iu c_1})s^{c_1 - c_1 \cdot d}}{c_1 - c_1 \cdot d} 
\cup e^{-\pi \iu c_1} \Gamma(1-c_1) \hGamma_F \\ 
\nonumber 
& = \int_F \sum_{d\in H_2(F,\Z)} 
e^{-\tau + \tau \cdot d} J_d(-1) \left( 
\prod_{k=0}^{c_1\cdot d-1}(c_1-k) \right) 
\frac{1-e^{2\pi \iu c_1}}{-c_1} s^{c_1-c_1\cdot d} e^{-\pi \iu c_1} 
\Gamma(1-c_1) \hGamma_F \\ 
\nonumber 
& = \int_F I_{K_F}(\tau-  c_1 \log (-s), -1) 
\cup \Gamma(1-c_1) \hGamma_F \frac{1-e^{2\pi \iu c_1}}{-c_1} 
\end{align} 
where we choose the branch $\Im \log(-s) = -\pi$. 
The last line equals the right-hand side of \eqref{eq:local-charge}. 
Comparing this with \eqref{eq:cM_jump}, we obtain \eqref{eq:local-charge}. 

The equality \eqref{eq:anticanonical-charge} can be obtained from \eqref{eq:local-charge} 
by differentiating in $s$. It is obvious that the left-hand side of \eqref{eq:anticanonical-charge} 
multiplied by $2\pi\iu/s$ 
is the derivative of the left-hand side of \eqref{eq:local-charge}
Using the expression \eqref{eq:cM_jump_series} for the right-hand side of 
\eqref{eq:local-charge}, we get that the derivative of the right-hand side 
of \eqref{eq:local-charge} is 
\[
s^{-1} \int_F \sum_{d\in H_2(F,\Z)} e^{-\tau + \tau \cdot d} \tI_d(-1) 
s^{c_1- c_1\cdot d} 
(e^{\pi \iu c_1} - e^{-\pi \iu c_1}) \Gamma(1-c_1) \hGamma_F. 
\]
Using $\hGamma_Y = \hGamma_F/\Gamma(1+c_1) = 
\hGamma_F \Gamma(1-c_1) \frac{e^{\pi \iu c_1}- e^{-\pi \iu c_1}}{2\pi \iu c_1}$, 
we see that this equals the left-hand side of \eqref{eq:anticanonical-charge} multiplied 
by $2\pi \iu /s$. 
\end{proof}

\section{Generalizations to a sum of line bundles/complete intersections}
The above method can be easily generalized to  
a sum of line bundles over a Fano manifold $F$ or complete intersections in $F$. 
Consider the decomposition (the so-called ``nef partition'') 
\[
c_1 = v_0 + v_1 + \cdots +  v_c 
\]
of the first Chern class $c_1 = c_1(TF)$ such that 
each $v_i\in H^2(F,\Z)$ is nef. Suppose that we have a line 
bundle $\cV_i \to F$ with $c_1(\cV_i) = v_i$ for $i=1,\dots,c$. 
In this section, we consider the Gamma conjecture for 
the total space $\cV^\vee = \cV_1^\vee 
\oplus \cdots \oplus \cV_c^\vee$ of the sum of the anti-nef line bundles 
$\cV_1^\vee,\dots,\cV_c^\vee$ or a smooth complete intersection 
$Y \subset F$ cut out by a transversal section of $\cV = 
\cV_1\oplus \cdots \oplus \cV_c$. Note that the first Chern classes 
of these spaces are given by the nef class $v_0$. When $v_0=0$, 
these spaces are numerically Calabi-Yau. 

\para 
\label{para:nef_partition_W}
Let $W^{(0)},\dots,W^{(c)}$ be Laurent polynomial whose coefficients 
are positive real. We consider the family of Laurent polynomials 
\begin{equation} 
\label{eq:W_r} 
W_r = W^{(0)}+ r_1 W^{(1)} + \cdots + r_c W^{(c)} 
\end{equation} 
parametrized by $r =(r_1,\dots,r_c) \in (\R_{>0})^c$ and assume that 
the origin is contained in the interior of the Newton polytope of $W_r$; 
then $W_r$ satisfies the assumptions \eqref{W1}-\eqref{W2}. 
Suppose that the mirror symmetric Gamma conjecture \eqref{eq:ms-gamma_tau} 
holds for $W_r$ via the identification of parameters 
\[
\tau = \tau(r) = \tau_0 + v_1 \log r_1 + \cdots + v_c \log r_c 
\]
for some $\tau_0\in H^2(F)$, that is, we have 
\begin{equation} 
\label{eq:ms-gamma_r}
\int_{(\R_{>0})^n} e^{-W_r/z} \frac{dx_1\cdots dx_n}{x_1\cdots x_n} 
= \int_F (z^{c_1} z^{\deg/2} J_F(\tau, -z)) \cup \hGamma_F  
\end{equation}  
with $\tau = \tau(r)$. 
Such a family of Laurent polynomials would arise as the potential 
function of holomorphic discs, see Remark \ref{rem:slag} below. 
We also expect that $W^{(0)} = 0$ when $v_0=0$, but we do not need 
to assume this in the following discussion.

\para 
In this situation, we expect the following mirror correspondence: 
\begin{align*} 
\cV^\vee & \longleftrightarrow 
\text{relative (exponential) periods of 
$\left((\C^\times)^n, (W^{(1)})^{-1}(-1/r_1) \cup \cdots \cup 
(W^{(c)})^{-1}(-1/r_c)\right)$} \\ 
Y & \longleftrightarrow 
\text{(exponential) periods of  $(W^{(1)})^{-1}(1/r_1) \cap \cdots 
\cap (W^{(c)})^{-1}(1/r_c)$} 
\end{align*}  
where `exponential periods' mean integrals of algebraic forms 
multiplied by $e^{-W^{(0)}/z}$; 
they are usual periods when $W^{(0)}=0$. 

\para Introduce the $I$-functions \cite{Coates-Givental} 
for $\cV^\vee$ and $Y$ as follows: 
\begin{align*} 
I_{\cV^\vee}(\tau,z) 
& = e^{\tau/z} \sum_{d\in H_2(F,\Z)} 
e^{\tau \cdot d} J_d(z) \prod_{i=1}^c \prod_{k=0}^{v_i \cdot d-1} (-v_i -kz) \\ 
I_Y(\tau,z) & =  e^{\tau/z} \sum_{d\in H_2(F,\Z)} 
e^{\tau \cdot d} J_d(z) \prod_{i=1}^c \prod_{k=1}^{v_i \cdot d} (v_i + kz) 
\end{align*} 
where the $J$-function of $F$ is expanded as 
$J_F(\tau,z) = e^{\tau/z} \sum_{d\in H_2(F,\Z)} e^{\tau \cdot d}J_d(z) $ as before. 
We regard $I_{\cV^\vee}(\tau,z)$ as $H^*(\cV^\vee)$-valued function 
and $I_Y(\tau,z)$ as $H^*(Y)$-valued function. 
These $I$-functions are related to the respective $J$-functions via 
mirror transformation and therefore compute certain Gromov-Witten 
invariants of $\cV^\vee$ or $Y$. 

\begin{theorem}
\label{thm:generalizations}  
Let $W_r$ be a family of Laurent polynomials as in \eqref{eq:W_r} satisfying 
the mirror symmetric Gamma conjecture \eqref{eq:ms-gamma_r}. 
Let $s_1,\dots,s_c>0$ be sufficiently large positive numbers. We have 
\[ 
(2\pi \iu)^c \int_{B_{s}} e^{-W^{(0)}/z} 
\frac{dx_1\cdots dx_n}{x_1\cdots x_n}  
= 
\int_{\cV^\vee} 
\left( z^{v_0} z^{\frac{\deg}{2}} 
I_{\cV^\vee}(\tau_0-v \log (-s) , -z) \right) 
\cup \hGamma_{\cV^\vee} \Ch(\cO_F) 
\] 
where $v\log (-s) = \sum_{i=1}^c v_i \log (-s_i)$, 
$B_{s}=(\R_{>0})^n \cap 
\bigcap_{i=1}^c \{W^{(i)}(x) \le s_i\}$ is a relative $n$-cycle
and we choose the branch $\Im(\log (-s_i)) = -\pi$.  
We also have 
\[
\left(\prod_{i=1}^c s_i \right)\int_{C_{s}} e^{-W^{(0)}/z} 
\frac{d \log x_1 \cdots d \log x_n}{d W^{(1)} \cdots 
d W^{(c)}} 
= \int_{Y} 
\left (z^{v_0} z^{\frac{\deg}{2}} 
I_Y(\tau_0- v\log s, -z) \right) 
\cup \hGamma_Y, 
\] 
where $v \log s = \sum_{i=1}^c v_i \log s_i$ 
and $C_{s}=
(\R_{>0})^n \cap \bigcap_{i=1}^c (W^{(i)})^{-1}(s_i)$ 
is an $(n-c)$-cycle. 
\end{theorem}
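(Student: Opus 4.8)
\textbf{Proof proposal for Theorem \ref{thm:generalizations}.}

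The plan is to iterate the Laplace transformation argument of Section \ref{sec:Lap_proof} once for each of the $c$ line bundles, peeling off one factor $W^{(i)}$ at a time. First I would fix $r_2,\dots,r_c$ and treat $\cI(z) = \cI(z; r)$ as a function of $r_1$ through $W_r = (W^{(0)} + r_2 W^{(2)} + \cdots + r_c W^{(c)}) + r_1 W^{(1)}$; the bracketed part plays the role of ``$W^{(0)}$'' and $r_1 W^{(1)}$ the role of the varying term. Applying Proposition \ref{prop:hcI_series} and Proposition \ref{prop:Hilbert_trans} verbatim (their proofs only use positivity of coefficients and the estimate \eqref{eq:estimate_J}, which still hold), the $s_1$-integrated quantity $\cM_1(s_1)$ has a jump across $[T,\infty)$ equal to $(2\pi\iu)$ times an integral of $e^{-(W^{(0)}+\sum_{i\ge 2} r_i W^{(i)})/z}\frac{dx_1\cdots dx_n}{x_1\cdots x_n}$ over $(\R_{>0})^n \cap \{W^{(1)} \le s_1\}$, and on the other side a power series in $s_1^{-1}$ whose termwise integration inserts the factor $\prod_{k=1}^{v_1\cdot d}(v_1+k z)$ (after the branch bookkeeping $\Im\log(-s_1)=-\pi$) into the $J$-function, i.e.\ produces a ``partial $I$-function'' with only the $i=1$ product present. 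Crucially, this new object is again a sum over $d$ of Gromov--Witten coefficients times $z$-powers times a polynomial in the $v_i$'s, and it still satisfies an estimate of the shape \eqref{eq:estimate_J} (the extra polynomial factor $\prod(v_1+kz)$ grows only like $(v_1\cdot d)!$ times a geometric factor), so the hypotheses needed to repeat the argument are preserved.

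Next I would induct: having performed the transformation in $s_1,\dots,s_{j-1}$, the resulting expression is $(2\pi\iu)^{j-1}\int_{B_{s_1,\dots,s_{j-1}}} e^{-(W^{(0)}+\sum_{i\ge j} r_i W^{(i)})/z}\frac{dx_1\cdots dx_n}{x_1\cdots x_n}$, matched against an integral over $F$ of a partial $I$-function carrying the products for $i=1,\dots,j-1$. Treating this as a function of $r_j$ via $W = (\text{fixed}) + r_j W^{(j)}$, I apply the Laplace/Hilbert-transform pair once more. The bookkeeping of the accumulated $\Gamma(1-v_i)$ and $e^{-\pi\iu v_i}$ factors is identical to the rank-one case done in the proof of Theorem \ref{thm:main}, done in parallel over $i=1,\dots,c$; at the end the product of all these factors, applied to $\hGamma_F$, reconstitutes $\hGamma_{\cV^\vee} = \hGamma_F \prod_{i=1}^c \Gamma(1+v_i)^{-1}$ on the total-space side (resp.\ $\hGamma_Y = \hGamma_F \prod_{i=1}^c \frac{v_i}{\Gamma(1+v_i)}$ on the complete-intersection side), exactly as $\hGamma_{K_F},\hGamma_Y$ appeared there. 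The power $z^{v_0}z^{\deg/2}$ in the statement is what survives of $z^{c_1}z^{\deg/2}$ after the $c$ factors $\prod_k(-v_i-kz)$ absorb the $z^{v_i}$-parts, using the homogeneity identity $z^{c_1(TX)}z^{\deg/2}I_X(\tau,-z)=I_X(\tau,-1)$ noted in the remarks.

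The second identity (the complete-intersection/$(n-c)$-cycle one) I would obtain, as in the proof of Theorem \ref{thm:main}, by differentiating the first identity once in each $s_i$: on the oscillatory side $\partial_{s_1}\cdots\partial_{s_c}$ turns the relative integral over $B_s$ into the fiber integral over $C_s=(\R_{>0})^n\cap\bigcap(W^{(i)})^{-1}(s_i)$ of $e^{-W^{(0)}/z}\frac{d\log x_1\cdots d\log x_n}{dW^{(1)}\cdots dW^{(c)}}$ up to the prefactor $(2\pi\iu)^c/(s_1\cdots s_c)$, and on the cohomology side it converts each factor $\prod_{k=0}^{v_i\cdot d-1}(-v_i-kz)$ into $\prod_{k=1}^{v_i\cdot d}(v_i+kz)$ together with the sign/$\Gamma$ adjustments that upgrade $\hGamma_{\cV^\vee}$ to $\hGamma_Y$, precisely paralleling the single-bundle computation with $\hGamma_Y=\hGamma_F/\Gamma(1+c_1)$ replaced by its multi-factor analogue.

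The main obstacle I anticipate is justifying the interchange of summation and integration (and of the iterated Laplace integrals with the convergent power-series expansions) uniformly through the induction: at step $j$ the integrand is no longer a bare Gromov--Witten series but carries $j-1$ accumulated polynomial factors $\prod_k(v_i+kz)$, so I must check that the analogue of the partial-sum bound ``$\le C_3 e^{C_4 t}$'' from the proof of Proposition \ref{prop:hcI_series} persists — i.e.\ that multiplying the coefficients $J_{d,i}$ by these polynomials keeps the $1/(c_1\cdot d)!$-type decay strong enough (after dividing by the $(v_i\cdot d)!$ produced by the $\Gamma$-factor from the previous Laplace step) for the dominated convergence theorem to apply for sufficiently negative parameter. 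A secondary bookkeeping obstacle is tracking the $c$ independent branch choices $\Im\log(-s_i)=-\pi$ and the anticlockwise continuations so that the product of phase factors $e^{-\pi\iu v_i}$ lands correctly; this is routine but must be done carefully since the $v_i$ need not be proportional to $c_1$, so the factors genuinely live in different pieces of $H^*(F)$ and do not simply scalar-multiply.
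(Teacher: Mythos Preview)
Your approach is correct and close in spirit to the paper's, but organized differently: you iterate the one-variable Laplace/jump argument $c$ times, peeling off one $W^{(i)}$ at each step and carrying a ``partial $I$-function'' through the induction, whereas the paper performs a single $c$-fold Laplace transform
\[
\hcI(s,z)=\int_{[0,\infty)^c} e^{\sum_i r_i s_i/z}\,\cI(\tau(r),z)\,dr_1\cdots dr_c
\]
in all of $r_1,\dots,r_c$ at once, obtains the product kernel $\prod_i (W^{(i)}-s_i)^{-1}$ and the full series expansion in one shot, integrates to $\cM(s,z)=\int_{-M}^{s_1}\!\cdots\!\int_{-M}^{s_c}\hcI$, and only then computes the iterated jump $\Delta_1\cdots\Delta_c\cM$. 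The payoff of the paper's organization is exactly that it eliminates the obstacle you flag: the dominated-convergence/interchange check is done once (via the original estimate \eqref{eq:estimate_J} on $J_{d,i}$) rather than $c$ times against successively more complicated coefficients, and there is no need to define or estimate intermediate partial $I$-functions. Your inductive scheme would work, but you are manufacturing bookkeeping that the simultaneous transform avoids.

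One small correction: Propositions \ref{prop:hcI_series} and \ref{prop:Hilbert_trans} cannot be applied ``verbatim'' --- those are Laplace transforms in $t=1/z$, not in $r_i$; what you actually need (and what the paper does) is the analogous computation with $r_i$ as the transform variable and $z$ held fixed, which is structurally parallel but not literally the same statement. Your derivation of the second identity by applying $\partial_{s_1}\cdots\partial_{s_c}$ to the first, and your identification of $\hGamma_{\cV^\vee}$ and $\hGamma_Y$ from the accumulated $\Gamma(1-v_i)$ and phase factors, match the paper exactly.
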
 
\begin{remark} 
For a generic $s\in (\R_{>0})^c$, $C_{s}\subset 
(\R_{>0})^n$ is either empty or a 
smooth complete intersection of codimension $c$. 
Therefore the measure $d \log x_1 \cdots d \log x_n/(d W^{(1)} 
\cdots dW^{(c)})$ on $C_{s}$ makes sense.  
\end{remark} 

\begin{proof}[Proof of Theorem \ref{thm:generalizations}] 
The proof is parallel to that of Theorem \ref{thm:main}, 
so we only give an outline. 
We consider the Laplace transform of the quantity \eqref{eq:ms-gamma_r} 
with respect to $r_1,\dots,r_c$: 
\[
\hcI(s,z) := 
\int_{[0,\infty)^c} e^{\sum_{i=1}^c r_i s_i/z} 
\cI\left(\tau(r),z \right) 
dr_1\cdots dr_c 
\]
with $s_i<0$ and $z>0$,  
where $\cI(\tau(r),z)$ denotes the quantity in \eqref{eq:ms-gamma_r}. 
Using the left-hand side of \eqref{eq:ms-gamma_r} 
and integrating $r_i$ out, we find 
\begin{equation} 
\label{eq:Hilbert_trans_mult} 
\hcI(s,z) = z^c \int_{(\R_{>0})^n} 
\frac{e^{-W^{(0)}/z}}{
(W^{(1)} -s_1) \cdots (W^{(c)} -s_c)} 
\frac{dx_1\cdots dx_n}{x_1\cdots x_n}.  
\end{equation} 
On the other hand, by using the right-hand side of 
\eqref{eq:ms-gamma_r} and computing the Laplace transform 
termwise, we obtain 
\begin{align}
\label{eq:hcI_series_mult}  
\hcI(s,z) 
& = \frac{z^c}{\prod_{i=1}^c (-s_i)} 
\int_F \left( z^{v_0} z^{\deg/2} 
\tI_{Y}(\tau_0 - v \log s, z) \right) 
\cup \left(\prod_{i=1}^c e^{-\pi \iu v_i} \Gamma(1-v_i) \right) 
\hGamma_F 
\end{align} 
where $\tI_Y(\tau,z)$ is defined by the same formula as $I_Y(\tau,z)$ 
but is regarded as a function taking values in $H^*(F)$ and 
we choose the branch $\Im (\log s_i) = \pi$. 
The interchange of the sum and the integral can be justified 
when $s_j \ll 0$ 
by the estimate \eqref{eq:estimate_J} as before;
this simultaneously shows the convergence of the Laplace 
transformation for $s_i \ll 0$. 
The integral representation \eqref{eq:Hilbert_trans_mult} then shows 
that $\hcI(s,z)$ can be analytically continued to a holomorphic 
function for $s\in (\C \setminus [0,\infty))^c$. 

We choose a sufficiently large $M\gg 0$ and consider the integral 
\[
\cM(s,z) = \int_{-M}^{s_1} ds'_1 \int_{-M}^{s_2} ds'_2 
\cdots \int_{-M}^{s_c} ds'_c \, \hcI(s',z) 
\]
where the path of integration is contained in the region 
$(\C\setminus [0,\infty))^c$.  
Using \eqref{eq:Hilbert_trans_mult}, we find that 
\[
\cM(s,z) = z^c \int_{(\R_{>0})^n} 
\left( \prod_{i=1}^c \log \left( \frac{W^{(i)} +M}{W^{(i)} -s_i} 
\right) \right) e^{-W^{(0)}/z} \frac{dx_1\cdots dx_n}{x_1\cdots x_n} 
\]
which is analytic in $s \in (\C\setminus [0,\infty))^c$. 
We compute iteratively the jump of $\cM(\tau,s,z)$ across the branch cut 
$s_j \in [0,\infty)$ for $j=1,\dots,c$. Writing  
$\Delta_j f (s_1,\dots,s_c) = 
\lim_{\epsilon \to +0} (f(s_1,\dots,s_j+ \iu \epsilon, \dots, s_c) 
- f(s_1,\dots,s_j-\iu \epsilon, \dots, s_c))$ for a function $f$ 
defined on $(\C\setminus [0,\infty))^c$,  we have 
\[
(\Delta_1\cdots \Delta_c \cM)(s,z) = (2\pi \iu z)^c 
\int_{B_{s}} 
e^{-W^{(0)}/z} \frac{dx_1\cdots dx_n}{x_1\cdots x_n}.  
\] 
Calculating the same jump using the power series expansion 
\eqref{eq:hcI_series_mult}, we obtain the first formula of the 
proposition. The second formula follows from the first by differentiating 
it in $s_1,\dots,s_c$.
\end{proof} 

\begin{remark} 
\label{rem:slag} 
The Laurent polynomial mirror $W_r$ as in \eqref{eq:W_r} 
should arise from disc counting as follows \cite{Hori-Vafa, Cho-Oh, Auroux,FOOO:toricI, NNU, CLLT, Tonkonog, GM, YS-Lin}. 
Suppose that we have simple normal crossing divisors $D_0,D_1,\dots,D_c$ 
such that $-K_F = D_0 +D_1+ \cdots + D_c$ with $v_i = [D_i]$ 
and that we have a special Lagrangian torus fibration 
$F \setminus \bigcup_{i=0}^c D_i \to B$. 
We consider holomorphic discs of Maslov index two with boundaries on a 
Lagrangian torus fiber $L\subset F$. 
The potential function is given by counting such holomorphic discs: 
\[
W_r = \sum_{\substack{\beta \in H_2(F,L) \\ 
\mu(\beta)=2}} n_\beta x^{\partial \beta}  \prod_{i=0}^c r_i^{D_i\cdot \beta} 
\]
where $n_\beta$ is the open Gromov-Witten invariant, 
a virtual count of stable holomorphic discs with boundary 
in $L$ of class $\beta\in H_2(F,L)$. 
The factor $\prod_{i=0}^c r_i^{D_i\cdot \beta}$ can be interpreted as 
the exponentiated area $e^{\int_\beta \tau}$ of a disc 
with $\tau = \sum_{i=0}^c v_i \log r_i$.  
The Maslov index $\mu(\beta)$  
is given by $\mu(\beta) = 2\beta \cdot (D_0+\cdots +D_c)$ 
and hence $\mu(\beta) =2$ implies that there exists $i\in \{0,1,\dots,c\}$ 
such that $D_i \cdot \beta =1$ and $D_j \cdot \beta =0$ for all $j\neq i$. 
This gives a function of the form \eqref{eq:W_r}. 
\end{remark} 


\begin{example} 
Let $F$ be a Fano toric variety as in Example \ref{exa:toric}. 
With notation as there, suppose that we have a partition 
$\{D_1,\dots,D_c\} = U \sqcup V_1 \sqcup \cdots \sqcup V_l$ 
such that $u = \sum_{D_i\in U} D_i$ and $v_j = \sum_{D_i\in V_j} D_i$ 
($1\le j\le l$) 
are nef.   
Let $F'\subset F$ be a smooth Fano hypersurface 
in the class $u$. 
Applying the second half of Theorem \ref{thm:generalizations} 
to $\cV = \cO(u)$,  
we obtain the mirror symmetric Gamma conjecture for $F'$ 
\begin{equation*} 
\int_{Z \cap (\R_{>0})^n} e^{-W'/z} 
\frac{d \log x_1 \wedge \cdots \wedge  
d\log x_n}{d(\sum_{D_i\in U} e^{-\lambda_i} x^{b_i})} = 
\int_{F'} (z^{c_1(F')} z^{\deg/2} I_{F'}(\tau,-z)) \cup  \hGamma_{F'} 
\end{equation*} 
for $\tau = -\sum_{i=1}^c  \lambda_i D_i$, 
where we set 
$W' =e^{-\lambda_{k+1}} x^{b_{k+1}} + \cdots + e^{-\lambda_c} 
x^{b_c}$ 
and 
$Z=\{x\in (\C^\times)^n : e^{-\lambda_1} x^{b_1} + 
\cdots + e^{-\lambda_k} x^{b_k} = 1\}$. 
This result already appeared in \cite[Theorem 5.7]{Iritani:periods}. 

We further apply the first half of Theorem \ref{thm:generalizations} 
to $F'$ and the nef partition $c_1(F') = v_1+\cdots+v_l$.  
The function $W' \colon Z \to \C$ is not (apparently) a Laurent polynomial. 
In most cases, however, we can find a torus chart $(\C^\times)^{n-1}\subset Z$ 
such that $Z \cap (\R_{>0})^n$ corresponds to $(\R_{>0})^{n-1}$ 
and that $W$ is a Laurent polynomial on that chart, see e.g.~\cite[\S 2]{CKP}. 
Also, as mentioned before, $I_{F'}$ and the $J$-function $J_{F'}$ 
coincide after a change of variables, but they are not necessarily the same. 
Nevertheless, we can easily check that 
the argument in the proof of Theorem \ref{thm:generalizations} 
applies equally to this situation where the mirror is not necessarily 
a Laurent polynomial and the $J$-function is replaced with the $I$-function. 
We obtain the following equality: 
\[ 
\int_{B}
\frac{d\log x_1\cdots d\log x_n}
{d(\sum_{D_i\in U} e^{-\lambda_i} x^{b_i})} 
= 
\int_{F} \sum_{d\in H_2(F,\Z)} e^{-\tau+\tau\cdot d}
\frac{\prod_{i=1}^c D_i \Gamma(D_i- D_i\cdot d)}
{\Gamma(u-u\cdot d) \prod_{i=1}^l \Gamma(v_i+1-v_i \cdot d)} 
\] 
where $B=\{x \in Z \cap (\R_{>0})^n : 
\sum_{D_i\in V_j} e^{-\lambda_i} x^{b_i} 
\le 1, j=1,\dots,l\}$ and $\tau = -\sum_{i=1}^c \lambda_i D_i$. 
\end{example}

\paragraph{Acknowledgments.} 
I thank Atsushi Kanazawa, Albrecht Klemm and Junxiao Wang for helpful discussions. I also thank an anonymous referees for valuable comments. 
This research is supported by JSPS Kakenhi Grant Number 20K03582, 21H04994.

\providecommand{\arxiv}[1]{\href{http://arxiv.org/abs/#1}{arXiv:#1}}

Department of Mathematics, 
Graduate School of Science, Kyoto University, 
Kitashirakawa-Oiwake-cho, Sakyo-ku, Kyoto, 606-8502, Japan 

{\tt iritani@math.kyoto-u.ac.jp}

\end{document}